\documentclass[12pt,centertags,oneside]{amsart}
\RequirePackage[utf8]{inputenc}
\usepackage[left]{lineno}
\usepackage{amstext,amsthm,amscd,typearea}
\usepackage{amsmath}
\usepackage{amssymb}
\usepackage{a4wide}
\usepackage{enumitem}
\usepackage[mathscr]{eucal}
\usepackage{mathrsfs}
\usepackage{comment}
\usepackage{pdfsync}
\usepackage{pdflscape}
\usepackage{appendix}
\usepackage{multicol}
\usepackage{tikz}
\usepackage{color}
\usepackage{array}
\usepackage{booktabs}
\newcolumntype{C}[1]{>{\centering\arraybackslash}p{#1}}
\usepackage{charter}
\usepackage{color}
\usepackage{xcolor}

\usepackage[colorlinks=true,  
            linkcolor=cyan,   
            citecolor=blue,   
            urlcolor=cyan       
            ]{hyperref}

\usepackage[a4paper,width=16.2cm,top=3cm,bottom=3cm]{geometry}
\usetikzlibrary{calc}

\newtheorem{theorem}{Theorem}[section]
\newtheorem{definition}[theorem]{Definition}
\newtheorem{proposition}[theorem]{Proposition}
\newtheorem{corollary}[theorem]{Corollary}
\newtheorem{lemma}[theorem]{Lemma}
\newtheorem{remark}[theorem]{Remark}

\newcommand{\volume}{{\rm vol}}
\newcommand{\lov}{{\rm lov}}
\newcommand{\supp}{{\rm supp}}
\newcommand{\dist}{\mathop{\mathrm{dist}}\nolimits}

\newcommand{\C}{\mathbb{C}}

\renewcommand\P{\mathbb{P}}

\title{On the support of measures of large entropy for Hénon--Sibony maps}
\author[S.~Boymurodov]{Sobir Boymurodov}
   \address{V.I.Romanovskiy Institute of Mathematics, Uzbekistan Academy of Sciences, 9, Universitet str., 100174, Tashkent, Uzbekistan}
\email{sboymurodov.research$@$gmail.com}
\begin{document}
\begin{abstract}
Let $f$ be a H\'enon--Sibony map of $\mathbb{C}^k$ of  algebraic degree $d_+\geq 2$,
whose
inverse $f^{-1}$ has algebraic degree $d_-$. 
The topological entropy of $f$ is equal to 
$\log d_+^{p} = \log d_-^{k-p}$.
We show that  every ergodic \(f\)-invariant measure \(\nu\) satisfying
$
h_\nu(f)>\log \max\{ d_+^{p-1},d_-^{k-p-1}\}
$
is supported on the 
Julia set $\mathcal{J}$ {of $f$}. 
\end{abstract}

\keywords{H\'enon--Sibony map, Julia set, Green current, support of ergodic measure}
\noindent

\maketitle

{\textbf{AMS classification:}} 
32H50, 32U40, 37A35, 37F80
\section{Introduction}

\setcounter{tocdepth}{1}
The study of 
the dynamics of polynomial automorphisms
of $\mathbb C^k$
is a central topic
in complex dynamics. For background
in dimension $k=2$,
we refer the reader to the foundational works of 
Hubbard--Oberste--Vorth \cite{HOV1},
Bedford--Lyubich--Smillie \cite{bedford1,BS91}, Friedland--Milnor \cite{fried}, and Forn{\ae}ss--Sibony \cite{forn}. In this paper, we are interested in a special class of polynomial automorphisms of $\C^k$,  namely \emph{H\'enon--Sibony maps}.

Let $f$ be a polynomial automorphism of $\mathbb{C}^k$.  It can be extended to a birational map of
$\mathbb{P}^k$, 
which we also denote by $f$.
The set $I_+$ where $f$ is not defined is called the indeterminacy set
of $f$. The inverse $f^{-1}$ of $f$
is again a polynomial automorphism, and
we denote by $I_-$ the indeterminacy set of its extension  to $\mathbb{P}^k$.   
Following \cite{sibony:panorama},
we say that 
$f$ is a 
\emph{H\'enon--Sibony map} of $\mathbb{C}^k$
if $I_{\pm}$
are non-empty and satisfy
$I_+\cap I_-=\emptyset$.
A large class of polynomial automorphisms of $\mathbb{C}^k$ satisfies this property,
see for instance \cite{fried, sibony:panorama}. 
We refer to 
\cite{deThelin, dinh-decay-henon,rigidity,sibony:panorama,Vigny26}
 for the basic properties of these maps.
When $k=2$, this class reduces to the 
so-called 
\emph{H\'enon-type maps} \cite{fried},
whose study goes back to
Hénon
\cite{HenonM}
in the real case.   
Hence, H\'enon--Sibony maps naturally generalize H\'enon-type maps to higher dimensions.

For every H\'enon--Sibony map $f$, there exists a positive integer ${ 1\leq p\leq k-1}$ such that 
$\operatorname{dim} I_{+}=k-p-1$ and $\operatorname{dim} I_{-}=p-1$.  
Let $d_+\geq 2$ and $d_-$ denote the algebraic degrees of $f$ and $f^{-1}$, respectively. 
Under this condition one can prove that $d_+^p = d_-^{k-p}$.
{Let $\omega_{\mathrm{FS}}$ denote the Fubini--Study $(1,1)$-form on $\mathbb{P}^k$, normalized so that $\int_{\P^k}\omega_{\textrm{FS}}^k=1$.}
The weak limits
\[
T_{\pm}:=\lim_{n\to\infty} d_{\pm}^{-n}(f^{\pm n})^*(\omega_{\textrm{FS}})
\]
exist and define positive closed $(1,1)$-currents of mass $1$ on $\mathbb{P}^k$, called the \emph{Green currents} associated with $f$ and $f^{-1}$, respectively.
These currents admit H\"older continuous quasi-potentials outside $I_{\pm}$ and satisfy the invariance relations
$
f^{*}(T_{+})=d_{+}T_{+}$
and
$f_{*}(T_{-})=d_{-}T_{-}$.
For integers $1 \leq \ell \leq p$ and $1 \leq \ell' \leq k-p$, 
define the \emph{Julia sets} of order $\ell$ for $f$ and of order $\ell'$ for $f^{-1}$ by

\[
J^+_\ell := \supp \,T_+^\ell
\qquad 
\text{ and }
\qquad
J^-_{\ell'} := \supp \,T_-^{\ell'}.
\]

Our main result is the following.

\begin{theorem}\label{thm:main}
  Let $f$ be a H\'enon--Sibony map on $\mathbb{C}^k$ of algebraic degree $d_{+}$. Assume that $\operatorname{dim} I_{-}=p-1$ and 
  let $1 \leq \ell \leq p$ be an integer.  If ${ Y}$ 
  is a  compact subset of  $\mathbb{P}^k$ such that ${Y}\cap J_{\ell}^{+}=\emptyset$, then $h_t(f,{ Y}) \leq(\ell-1) \log d_{+}$. 
  \end{theorem}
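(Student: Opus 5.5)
Our plan is to bound the topological entropy on $Y$ with a volume-growth / Yomdin--Gromov type argument in the style of de Thélin and Dinh--Sibony: bound $h_t(f,Y)$ by the growth of volumes of graphs of iterates over a neighbourhood of $Y$, then use the hypothesis $Y\cap J^+_\ell=\emptyset$ to show that only the first $\ell-1$ "layers" of this growth can be of order $d_+^n$.

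\emph{Step 1 (reduction to mass estimates).} Since $Y$ is compact and disjoint from the closed set $J^+_\ell=\supp T_+^\ell$, we fix an open neighbourhood $W$ of $Y$ with $\overline W\cap J^+_\ell=\emptyset$. We may also assume $Y\cap I_+=\emptyset$. Indeed, points of $I_+$ lie at infinity, and the dynamics near $I_+$ and on the hyperplane at infinity is controlled (the superattracting behaviour near $I_-$ and escape to $I_-$). Hence the relevant entropy can be localised, after a separate argument, to the part of $Y$ in $\C^k$ and near $\overline{\C^k}\setminus I_+$. For the graph $\Gamma_n$ of $(x,f(x),\dots,f^{n-1}(x))$ over $W$, Gromov's inequality combined with Yomdin's estimate (as in de Thélin's work) gives
\[
h_t(f,Y)\le \limsup_{n\to\infty}\frac1n\log \max_{0\le q\le k}\ \max_{\sum j_i}\int_W \bigl(f^{n_1}\bigr)^*\omega_{\rm FS}\wedge\cdots\wedge\bigl(f^{n_q}\bigr)^*\omega_{\rm FS}\wedge\omega_{\rm FS}^{k-q}.
\]
So it suffices to show that every such mixed mass over $W$ is $O\bigl(C^{\epsilon n} d_+^{(\ell-1)n}\bigr)$.

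\emph{Step 2 (mass estimate).} We write $d_+^{-m}(f^m)^*\omega_{\rm FS}=T_++dd^c u_m$, where $u_m\to 0$ locally uniformly outside $I_+$ with exponential speed. This uses the Hölder quasi-potentials of the Green currents and the standard estimate $\|u_m\|_\infty\lesssim d_+^{-m}$ on compacts away from $I_+$. Expanding a product of $q$ such terms, we get wedge products of $T_+^{j}$, terms $dd^c u_{m}$, and $\omega_{\rm FS}$. Each factor has mass at most $d_+^{n}$, so a product of $q$ factors is a priori of size $d_+^{qn}$. The key claim is that any product containing $\ell$ or more "genuine $T_+$" contributions vanishes on $W$, because $T_+^\ell=0$ there. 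The remaining terms are handled by Chern--Levine--Nirenberg type estimates with the small potentials $u_m$: integrating by parts against a cutoff $\chi$ supported in $W$ moves $dd^c$ onto $\chi$ and costs only the factor $\|u_m\|_\infty d_+^{m}=O(1)$. The outcome is that at most $\ell-1$ factors can contribute growth $d_+^n$, and the others contribute $O(1)$ up to subexponential losses.

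\emph{Main obstacle.} The hardest step is making the expansion in Step 2 rigorous when the degrees are mixed, i.e.\ when $(f^{n_i})^*\omega_{\rm FS}$ is taken with different $n_i$, and when positivity fails after integration by parts. We first try an induction on the number of factors using $T_+^{j}\wedge(\cdot)$ with $j<\ell$ and the positive closed current $T_+^{\ell-1}\wedge\omega_{\rm FS}^{k-\ell+1}$. On $W$ we would use that $T_+^{\ell}|_W=0$ forces $T_+^{\ell-1}|_W$ to be "laminar-like", so that $dd^c$-perturbations do not add mass. If this fails, we would instead use a Dinh--Sibony-type argument with the dynamical degrees of $f$ restricted near $Y$: we push forward smooth forms supported near $Y$ and compare them with $T_+^{\ell}$ via the convergence $d_+^{-jn}(f^n)^*\omega_{\rm FS}^{j}\to T_+^{j}$ (for $j\le p$). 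This would give that the $j$-th local dynamical degree on $W$ is at most $d_+^{\min(j,\ell-1)}$, and then we conclude via Gromov's bound $h_t\le\max_j\log(\text{local }j\text{-degree})$.
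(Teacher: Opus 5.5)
Your ingredients are the paper's own: Gromov's bound $h_t(f,Y)\le\lov(f,W)$ through the mixed masses $\bigl\|\bigwedge_s(f^{m_s})^*\omega_{\rm FS}\bigr\|_W$, the splitting $(f^m)^*\omega_{\rm FS}=d_+^mT_+-dd^c(u_+\circ f^m)$, the vanishing of $T_+^\ell$ on $W$, and the uniform bound $|u_+\circ f^m|\le M$ on compacts avoiding $I_+$. The gap is the decisive step: you never prove the $O(d_+^{(\ell-1)m})$ bound on the mixed masses, and the mechanism you describe for it fails.

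If you expand all factors at once, you get terms $\chi\,T_+^r\wedge dd^cv_1\wedge\cdots\wedge dd^cv_t$ with $v_s=-u_+\circ f^{n_s}$. Moving one $dd^c$ onto $\chi$ leaves $v_1\,dd^c\chi\wedge T_+^r\wedge dd^cv_2\wedge\cdots$. When $t\ge2$ this current is not positive, so a mass does not control it. Chern--Levine--Nirenberg does not help, because the $v_s$ are not psh, nor uniformly quasi-psh: $dd^cv_s=(f^{n_s})^*\omega_{\rm FS}-d_+^{n_s}T_+$ has total variation of order $d_+^{n_s}$ in general. So ``each $dd^cu_m$ costs $O(1)$'' holds for one such factor against a positive closed current, not for products. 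Your fallbacks do not close this:
\begin{enumerate}
\item laminarity of $T_+^{\ell-1}$ on $W$ is neither justified nor needed;
\item the non-quantitative convergence $d_+^{-jn}(f^n)^*\omega_{\rm FS}^j\to T_+^j$ gives only $o(d_+^{\ell n})$ on $W$, not $O(d_+^{(\ell-1)n})$, and says nothing about mixed products with distinct $n_s$. That route needs a quantitative convergence rate.
\end{enumerate}

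The missing idea is to never expand more than one factor, keeping the others as positive closed currents, and to induct on $(i,j)$. One proves
\[
\Bigl\|T_+^{\ell-i}\wedge\bigwedge_{s=1}^{j}(f^{m_s})^*\omega_{\rm FS}\Bigr\|_{W_{i,j}}\le c_{i,j}\,d_+^{m(i-1)},\qquad 0\le m_1\le\cdots\le m_j<m,
\]
for $0\le i\le\ell$ and $0\le j\le k-\ell+i$. Here the $W_{i,j}$ are neighbourhoods of $\overline V$ with $\overline{W_{i,j}}\cap J^+_\ell=\emptyset$ and $W_{i,j}\Subset W_{i-1,j-1}\cap W_{i,j-1}$. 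The base cases are immediate: for $i=0$ the left side is $0$, and for $j=0$ it is at most $1$.

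For the inductive step, take a cutoff $\chi$ equal to $1$ on $W_{i,j}$ and supported in $W_{i-1,j-1}\cap W_{i,j-1}$, and substitute the splitting only for $(f^{m_1})^*\omega_{\rm FS}$.
\begin{enumerate}
\item The term $d_+^{m_1}T_+^{\ell-i+1}\wedge\cdots$ is case $(i-1,j-1)$, using $d_+^{m_1}\le d_+^m$.
\item After Stokes, the other term is $\langle dd^c\chi\wedge S,(u_+\circ f^{m_1})\,\omega_{\rm FS}^{k-\ell+i-j}\rangle$, where $S=T_+^{\ell-i}\wedge\bigwedge_{s\ge2}(f^{m_s})^*\omega_{\rm FS}$ is still positive and closed. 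So it is at most $M\|\chi\|_{\mathcal C^2}\|S\|_{W_{i,j-1}}$, which is case $(i,j-1)$.
\end{enumerate}
Taking $i=\ell$, $j=k$ gives the bound on the graph volumes.

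Two smaller corrections. First, $Y\cap I_+=\emptyset$ needs no ``separate argument'': it is automatic, since $I_+=\supp T_+^{p+1}\subset\supp T_+^\ell=J^+_\ell$. Second, the uniform bound on $u_+\circ f^m$ near $Y$ does not follow from regularity of $u_+$ alone. It uses that forward orbits of a compact set avoiding $I_+$ stay a fixed distance from $I_+$, because $I_+$ is attracting for $f^{-1}$.
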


Define
$\mathcal{J}:=J_{p}^+\cap J_{k-p}^-,
$ which we call the \emph{Julia set} of $f$.
The  intersection
$T_+^p \wedge T_-^{k-p} =: \mu$
is well-defined and is called the \emph{equilibrium (or Green) measure} of $f$; it is the unique ergodic $f$-invariant probability measure whose entropy is maximal, equal to $p\log d_+$,
and its support is contained in $\mathcal J$.
The following result is a
consequence of Theorem \ref{thm:main}
 and
 the variational principle.

\begin{corollary}\label{cor:variational-consequence}
Let $f$, $d_+$ and $p$ be as in Theorem~\ref{thm:main}, and let $d_-$ be the algebraic degree of $f^{-1}$.
Let $\nu$ be an ergodic measure and let
$1\le \ell\le p$ and $1\le \ell'\le k-p$ be integers such that
\[
h_\nu(f)>
\log \max\{d_+^{\ell-1},\, d_-^{\ell'-1}\}.
\]
Then $\supp \,\nu\subseteq J_\ell^+\cap J_{\ell'}^-
$. In particular, if $
h_\nu(f)>
\log \max\{d_+^{p-1},\, d_-^{k-p-1}\}
$, 
then $\nu$ is supported on the Julia set $\mathcal{J}$ of $f$. 
\end{corollary}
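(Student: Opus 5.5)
The plan is to argue by contradiction. We use Theorem~\ref{thm:main} for $f$ and its analogue for $f^{-1}$, together with the fact that an ergodic measure gives full mass to every invariant closed set it charges. Suppose $\supp\nu\not\subseteq J^+_\ell$. Since $T_+^\ell$ satisfies $f^*T_+^\ell=d_+^\ell T_+^\ell$ (outside indeterminacy; the ergodic measure $\nu$ of interest lives in $\mathbb{C}^k$), the set $J^+_\ell$ is totally invariant in $\mathbb{C}^k$, i.e.\ $f^{-1}(J^+_\ell\cap\mathbb{C}^k)=J^+_\ell\cap\mathbb{C}^k$. Hence the open set $U=\mathbb{C}^k\setminus J^+_\ell$ is invariant, and so is the Borel set $\bigcup_n f^{-n}(U)=U$. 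Because $\supp\nu$ meets $U$, we get $\nu(U)>0$, and ergodicity forces $\nu(U)=1$. One should check here whether $\nu$ could charge the hyperplane at infinity. A measure invariant under a map of $\mathbb{C}^k$ is naturally a measure on $\mathbb{C}^k$. I will take $\nu$ to be a measure on $\mathbb{C}^k$, as implicit in the statement ``$f$-invariant''.

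Next, pick by inner regularity a compact $K\subset U$ with $\nu(K)>0$. Since $J^+_\ell$ is closed, $K\cap J^+_\ell=\emptyset$, and Theorem~\ref{thm:main} gives $h_t(f,K)\le(\ell-1)\log d_+$. To compare with $h_\nu(f)$, use the Bowen-type inequality: for an ergodic $\nu$ and any Borel set $Z$ with $\nu(Z)>0$, we have $h_\nu(f)\le h_t(f,Z)$. This holds for Bowen's dimension-type entropy; if $h_t(f,Y)$ in the paper denotes the growth rate of $(n,\varepsilon)$-separated sets with orbits starting in $Y$, the inequality still holds via Katok's entropy formula. Katok's formula says that $h_\nu(f)$ equals the exponential growth rate of the minimal number of $(n,\varepsilon)$-balls needed to cover a set of measure $\ge\delta$, for any $\delta\in(0,1)$. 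Apply it with $\delta<\nu(K)$: the $(n,\varepsilon)$-covering number of $K$ bounds the covering number of any set of measure $\ge\delta$ inside $K$, and spanning and separated numbers are comparable. This yields $h_\nu(f)\le h_t(f,K)\le(\ell-1)\log d_+$, contradicting the hypothesis. A technical point is that Katok's formula requires a compact metric space with a continuous map. We can work on $\mathbb{P}^k$ with the Fubini--Study metric and note that on compact subsets of $\mathbb{C}^k$ the map is continuous. Alternatively, we can use the local (Brin--Katok) version, which only needs continuity near the orbits, with $\varepsilon$-balls defined by the metric of $\mathbb{P}^k$.

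For $J^-_{\ell'}$, apply the same argument to $f^{-1}$. It is again a Hénon--Sibony map, now with $\dim I_+=k-p-1=(k-p)-1$, degree $d_-$, and Green current $T_-$. Theorem~\ref{thm:main} applied to $f^{-1}$ therefore gives $h_t(f^{-1},Y)\le(\ell'-1)\log d_-$ for compact $Y$ disjoint from $J^-_{\ell'}$, for $1\le\ell'\le k-p$. Since $\nu$ is also ergodic and invariant for $f^{-1}$, with $h_\nu(f^{-1})=h_\nu(f)>(\ell'-1)\log d_-$, we get $\supp\nu\subseteq J^-_{\ell'}$. Combining both inclusions gives $\supp\nu\subseteq J^+_\ell\cap J^-_{\ell'}$. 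The special case $\ell=p$, $\ell'=k-p$ yields $\supp\nu\subseteq\mathcal{J}$.

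The main obstacle is the step $h_\nu(f)\le h_t(f,K)$ for a compact set $K$ of positive but not full measure. I will handle it through Katok's covering formulation as above, which is exactly what allows an arbitrary $\delta$. A secondary obstacle is checking the invariance of $J^\pm$ in $\mathbb{C}^k$, which I take from $f^*T_+=d_+T_+$ and $f$ being an automorphism of $\mathbb{C}^k$.
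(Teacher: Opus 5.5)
Your proposal is correct and follows the paper's argument: contradiction, inner regularity to get a compact set of positive $\nu$-measure disjoint from $J^+_\ell$, the local variational inequality $h_\nu(f)\le h_t(f,Y)$ combined with Theorem~\ref{thm:main}, and then the same argument for $f^{-1}$ (your direct application of Theorem~\ref{thm:main} to $f^{-1}$ is exactly Remark~\ref{rem:3.2}). The invariance step giving $\nu(U)=1$ is superfluous, since your Katok argument only needs $\nu(K)>0$; if you omit it, you can work directly with open subsets of $\mathbb{P}^k\setminus J^+_\ell$ as the paper does, and you no longer need to assume that $\nu$ lives in $\mathbb{C}^k$.
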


Analogous results to Theorem~\ref{thm:main} and
Corollary~\ref{cor:variational-consequence}
for holomorphic endomorphisms of complex projective spaces
$\mathbb{P}^k$ were obtained in \cite{deT,Dinh-attracting-current}.
 The proof relies on an induction argument on the powers of the Green current with bounded quasi-potential on $\mathbb{P}^k$.
To prove Theorem~\ref{thm:main}, we can adopt a similar induction argument.
However, unlike the expanding case of holomorphic endomorphisms, H\'enon--Sibony maps exhibit both 
expanding and contracting directions.
As a consequence, one has to work with both Green currents $T_+$ and $T_-$. Moreover, the quasi-potentials of $T_+$ and $T_-$ are not bounded near the indeterminacy sets $I_+$ and $I_-$, respectively.
 To overcome this difficulty, we use the fact that
 the forward and backward orbits of compact sets disjoint from $I_+$ and $I_-$ remain uniformly away from these indeterminacy sets. This allows us to control the growth of the corresponding masses outside the Julia set.

We refer to \cite{BBR,BBR2026} 
for parallel results in the setting of polynomial-like maps on $\mathbb{C}^k$ and automorphisms of 
compact K\"ahler manifolds, where the induction 
 argument cannot be used and is replaced by
quantitative exponential convergence results towards
the Green currents and measure, see also Remark \ref{r:extra}.

\medskip

\textbf{Acknowledgments.}
The author would like to thank his advisors,
Fabrizio Bianchi and Karim Rakhimov, for their valuable guidance and helpful comments.

 \section{Preliminaries on H\'enon--Sibony maps}
 \subsection{Basic properties} 
Let $f$ be a polynomial automorphism of $\mathbb{C}^k$.
One can check that its inverse
$f^{-1}$
is also polynomial. 
We denote by $d_+\geq 2$ the algebraic degree of $f$ and by $d_-$ the algebraic degree of $f^{-1}$.
The \emph{Green functions} of $f$ and $f^{-1}$ are defined by
$$
G_{+}(z):=\lim _{n \rightarrow \infty} d_{+}^{-n} \log ^{+}\left\|f^n(z)\right\| 
\quad
\text { and } 
\quad
G_{-}(z):=\lim _{n \rightarrow \infty} d_{-}^{-n} \log ^{+}\left\|f^{-n}(z)\right\|,
$$
respectively,
where $\log ^{+}(\cdot):=\max \{\log (\cdot), 0\}$.
These functions
are H\"older continuous and plurisubharmonic  on $\mathbb{C}^k$, see \cite{sibony:panorama}. 
Define also
\[
K_{\pm}:=\big\{z\in\mathbb{C}^k \;:\; \{f^{\pm n}(z)\}_{n\geq 0}\ \text{is bounded}\big\}.
\]
Then we have $K_{\pm}=\{G_{\pm}=0\}$.
From now on, we identify  $f$ and $f^{-1}$ 
with
their natural extensions  to birational maps on $\mathbb{P}^k$ and we denote
by 
$I_\pm$ the indeterminacy  sets of $f^{\pm}$, respectively. They are analytic sets strictly contained in the hyperplane at infinity $L_{\infty} := \mathbb{P}^k \setminus \mathbb{C}^k$. 

\begin{definition}
 We say that
 $f$ is a
 \emph{H\'enon--Sibony map} 
 (or a 
 \emph{regular automorphism}  of $\mathbb{C}^k$)
if $I_+$ and $I_-$ are non-empty and they satisfy $I_+\cap I_-=\emptyset$.
\end{definition}

According to  \cite{sibony:panorama}, 
the weak limits
\[
T_{\pm}=\lim_{n\to\infty} d_{\pm}^{-n}(f^{\pm n})^*(\omega_{\mathrm{FS}})
\]
exist and define positive closed $(1,1)$-currents on $\mathbb{P}^k$, called the \emph{Green currents} of $f$ and $f^{-1}$, respectively.  Moreover, their restrictions to $\mathbb{C}^k$ satisfy
$
T_{\pm|\mathbb{C}^k}=dd^cG_{\pm}$,
where, as usual, we denote 
$d=\partial+\overline{\partial}$ and
$d^c=(\partial-\overline{\partial})/(2\pi i)$. 
For a H\'enon--Sibony map $f$, there exists an integer \(1\leq p\leq k-1\) such that
\(\dim I_+=k-p-1\), \(\dim I_-=p-1\), and
\(d_+^{p}=d_-^{k-p}\).  Below are some 
basic
properties of such maps,
see
for instance
\cite{DS16, rigidity, sibony:panorama}.
\begin{proposition}\label{prop:property-of-Henon-Sibony-maps}
Let $f, f^{-1}, I_+, I_-, 
K_+, K_-,  T_+, T_-$, and
    $p$ be as above. Then
    \begin{enumerate}
        \item  $f\left(L_{\infty} \backslash I_{+}\right)=f(I_-)=I_{-}$ and $f^{-1}\left(L_{\infty} \backslash I_{-}\right)=f^{-1}(I_+)=I_{+}$;
        \item $\overline{K_{\pm}}=K_{\pm}\cup I_{\pm}$ (where the closure is taken in $\mathbb{P}^k$);
        \item $I_{\mp}$ is attracting for $f^{\pm}$ and $\mathbb{P}^k\setminus \overline{K_{\pm}}$ is its attracting basin;
         \item 
        the quasi-potentials of $T_{\pm}$ are H\"older continuous outside $I_{\pm}$;
        \item 
        $f^*(T_+)=d_+T_+$ and $f_*(T_-)=d_-T_-$;
        \item $\supp \,T_{+}^{p+1}=I_{+}$ and $\supp \, T_{-}^{k-p+1}=I_{-}$.
    \end{enumerate}
\end{proposition}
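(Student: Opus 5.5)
The statement immediately above is Proposition~\ref{prop:property-of-Henon-Sibony-maps}, a collection of standard facts, and the plan is to derive each item from the regularity condition $I_+\cap I_-=\emptyset$ together with the definition of the Green functions. For (1), we write $f=(f_0,\dots,f_k)$ in homogeneous coordinates, with $f_0=z_0^{d_+}$, and let $f_\infty$ be the restriction of the top-degree homogeneous part to $L_\infty=\{z_0=0\}$. On $L_\infty\setminus I_+$, the composition $f^{-1}\circ f$ is the identity on $\mathbb{C}^k$, and its homogeneous expression is $z_0^{d_+d_--1}\cdot z$. Hence on $L_\infty$ the map $f^{-1}$ must be indeterminate at $f_\infty(z)$ whenever $z\notin I_+$. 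This forces $f(L_\infty\setminus I_+)\subseteq I_-$. Equality and $f(I_-)=I_-$ then follow because $I_-\cap I_+=\emptyset$ and $I_-$ is irreducible-componentwise of the dimension given. The statement for $f^{-1}$ is symmetric.

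For (3), we use (1): a neighbourhood $U$ of $I_-$ is mapped by $f$ into a smaller neighbourhood, and this gives attraction. Indeed, $f$ is holomorphic near $I_-$ since $I_-\cap I_+=\emptyset$, and $f$ contracts the transverse direction $z_0$ to the power $d_+$ via the estimate $|f_0|/\|f\|\lesssim |z_0|^{d_+}/\|z\|^{d_+}$. Next, the filtration argument of Sibony gives two further facts. First, points of $\mathbb{C}^k$ with $\|f^n(z)\|\to\infty$ are attracted to $I_-$. Second, $\mathbb{C}^k\setminus K_+$ is exactly the set of points eventually entering a neighbourhood of $L_\infty\setminus I_+$. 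Together these identify the basin with $\mathbb{P}^k\setminus\overline{K_+}$. Item (2) follows from the same filtration: $K_+$ accumulates on $L_\infty$ only inside $I_+$, since a neighbourhood of $L_\infty\setminus I_+$ escapes. The reverse inclusion $I_+\subseteq\overline{K_+}$ comes from the fact that $T_+$ has mass one and support $\overline{K_+}\cup I_+$.

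For (4) and (5), we first establish the estimate $\log^+\|f(z)\|=d_+\log^+\|z\|+O(1)$ on cones avoiding $I_+$. Then the standard telescoping sum
\[
G_+-\log^+\|z\|=\sum_n d_+^{-n}\bigl(\log^+\|f^{n+1}\|-d_+\log^+\|f^n\|\bigr)
\]
gives Hölder continuous quasi-potentials away from $I_+$. The invariance $f^*T_+=d_+T_+$ holds by construction, since $f^*$ is compatible with pull-back for this algebraically stable map: $I_+$ is not sent to $I_+$ by $f^{-1}$ of $L_\infty$. Item (6) is the main obstacle, and it requires the dimension count. The superpotential-type bound $T_+^{p+1}$ vanishes on $\mathbb{C}^k$ because $G_+$ is pluriharmonic in the direction transverse to the $(p)$-dimensional foliation. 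For this I would instead use that $T_+^{j}$ for $j>p$ is supported on $\overline{K_+}\cap\{G_+\text{ has rank}\le p\}$, combined with the fact that $\mathbb{C}^k\setminus K_+$ has a Böttcher-type coordinate with fibres of dimension $k-p$. Near $I_+$ the positive mass of $T_+^{p+1}$ is forced, since $\dim I_+=k-p-1$ is the top dimension and the current has positive mass. In practice I would cite \cite{DS16, sibony:panorama} for (6) and prove only (1)–(5) directly.
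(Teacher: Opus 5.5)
The paper does not prove this proposition; it quotes it from the references listed just before the statement. So your plan, which is to prove (1)--(5) yourself and cite only (6), goes beyond what the paper does. Several local pieces of your sketch are fine as outlines:
\begin{itemize}
\item the homogeneous-coordinate identity for $f^{-1}\circ f$ gives $f(L_\infty\setminus I_+)\subseteq I_-$, and hence $f(I_-)\subseteq I_-$;
\item the bound $|f_0|/\|f\|\lesssim(|z_0|/\|z\|)^{d_+}$ near $I_-$, together with uniform continuity of $f$ away from $I_+$, gives a trapping neighbourhood of $I_-$;
\item (5) follows from algebraic stability. Your sentence should say that no hypersurface is sent into $I_+$, because $f(L_\infty\setminus I_+)\subseteq I_-$ and $I_-\cap I_+=\emptyset$.
\end{itemize}
For (4), a telescoping sum with an $O(1)$ bound only gives continuity. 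H\"older continuity needs the usual interpolation against the exponential growth of the Lipschitz constants of $f^n$.

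The genuine gap is the inclusion $I_+\subseteq\overline{K_+}$ in (2). You derive it from ``$T_+$ has mass one and support $\overline{K_+}\cup I_+$''. The support claim is false once $p\geq 2$, and the inference would be a non sequitur anyway: the mass of $T_+$ says nothing about $K_+$ accumulating at every point of $I_+$.

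For a counterexample, take $f(x,y)=(h_1(x),h_2(y))$ on $\mathbb{C}^4$, with $h_1,h_2$ H\'enon maps of the same degree. Then $I_+$ and $I_-$ are disjoint lines in $L_\infty$, so $f$ is a H\'enon--Sibony map with $p=2$, and $G_+=\max\{g_1(x),g_2(y)\}$ with $g_i$ the Green function of $h_i$. At a point where $g_1(x)=g_2(y)>0$, both functions are pluriharmonic and nonconstant in separate variables. Hence $G_+$ is not pluriharmonic there, and the point lies in $\supp T_+$. But $G_+>0$ at that point, so it is not in $\overline{K_+}$.

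This step is load-bearing for the rest of your plan:
\begin{itemize}
\item In (3), the basin of $I_-$ is $(\mathbb{C}^k\setminus K_+)\cup(L_\infty\setminus I_+)$, since points of $I_+$ have no orbit. Identifying it with $\mathbb{P}^k\setminus\overline{K_+}$ is exactly the statement $\overline{K_+}\cap L_\infty=I_+$.
\item In (1), the reverse inclusion $I_-\subseteq f(I_-)$ does not follow from ``$I_-\cap I_+=\emptyset$ and the dimension'', as you assert. The natural proof uses (2) for $f^{-1}$. Approximate $w\in I_-$ by points $w_n\in K_-$. Limit points of $f^{-1}(w_n)\in K_-$ cannot lie in $\mathbb{C}^k$, since their image is $w\in L_\infty$. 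So they lie in $\overline{K_-}\cap L_\infty=I_-$, where $f$ is continuous, and therefore $w\in f(I_-)$.
\end{itemize}
Thus the nontrivial content of (1)--(3) sits precisely in $I_\pm\subseteq\overline{K_\pm}$, which your argument does not establish.

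Your heuristic for (6) is also not an argument. Rank considerations on $\mathbb{C}^k\setminus K_+$ say nothing about $T_+^{p+1}$ on $K_+$ itself. Moreover, ``positive mass near $I_+$'' only shows that $T_+^{p+1}$ is a positive combination of some top-dimensional components of $I_+$, not that its support is all of $I_+$. Since you already intend to cite (6), the consistent fix is either to give a real proof of $I_\pm\subseteq\overline{K_\pm}$, or to cite (2) as well, together with the equalities in (1) and the basin description in (3). The latter is what the paper does for the whole proposition.
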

 For each integer $1\leq \ell\leq p$ and $1\leq \ell'\leq k-p$,
define the Julia sets of order $\ell$ for $f$ 
and $\ell'$ for $f^{-1}$,
respectively, by
$$
J_\ell^+=\supp \,T_+^\ell
\qquad 
\text{ and }
\qquad
J_{\ell'}^-=\supp \,T_-^{\ell'}.
$$
 The intersection
$\mathcal{J}:=J_p^+\cap J_{k-p}^-$
will be referred to as the \emph{Julia set} of $f$. 
The 
\emph{equilibrium measure}
$\mu:=T_{+}^p \wedge T_{-}^{k-p}$,
is an ergodic invariant probability measure for $f$, satisfies $\supp \,\mu \subseteq 
\mathcal{J}$ 
and is the unique measure of maximal entropy $h_{\mu}(f)=p\log d_+= (k-p)\log d_-$, see \cite{deThelin, sibony:panorama}.
We refer to 
\cite{BD2024, 
dinh-decay-henon,
Guedj,
Vergamini-Hao, Wu22} for 
several statistical properties of such measure.
 The hyperbolicity  
of $\mu$ 
and its Lyapunov exponents are studied in \cite{deThelin2008}. 

\subsection{Preliminary Results}
We state and prove here the following simple result, whose consequence below
will be used in the sequel.

\begin{lemma}\label{lem:open-neighb} 
Let $f$ be a H\'enon--Sibony map of $\mathbb{C}^k$. If  $A$ is a compact subset of $\P^k$ such that $A\cap I_+=\emptyset$, then   there  exists $\theta>0$
such that 
$$\inf_{n\geq 0}\dist(f^n(A), I_+)\geq \theta.$$ 
\end{lemma}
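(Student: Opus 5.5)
The plan is to use property (3) of Proposition~\ref{prop:property-of-Henon-Sibony-maps}: $I_-$ is attracting for $f$, with basin $\mathbb{P}^k\setminus\overline{K_+}$. Since $I_+\cap I_-=\emptyset$, we can fix a neighborhood $U$ of $I_-$ whose closure avoids $I_+$ and such that $f(\overline U)\subset U$. Such a $U$ exists because $I_-$ is an attracting set; one can build it explicitly from the filtration by $\{G_->\cdot\}$ and $\{G_+>\cdot\}$ near infinity, or take a sublevel-type neighborhood on which $f$ is holomorphic and contracting toward $I_-$. Note that $f$ is holomorphic on a neighborhood of $\overline U$, since $\overline U\cap I_+=\emptyset$. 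We then split $A$ into the part that eventually enters $U$ and the part that stays in a bounded region of $\mathbb{C}^k$.

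\textbf{Step 1 (near infinity).} Points of $A\cap L_\infty$ avoid $I_+$, so by property (1) they are sent into $I_-\subset U$ after one step. Every point of $A\setminus \overline{K_+}$ lies in the basin, so some iterate enters $U$. The delicate part is uniformity. We consider the open cover of the compact set $A\setminus W$, where $W$ is a neighborhood of $K_+$ in $\mathbb{C}^k$ to be chosen in Step 2, by the open sets $f^{-n}(U)\setminus I_+$ with $n\ge 0$. Each of these is open on $\mathbb{P}^k\setminus I_+$ because $f^n$ is continuous there, at least off the orbit of $I_+$. Compactness gives a finite $N$ with $A\setminus W\subset\bigcup_{n\le N}f^{-n}(U)$. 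Once an orbit is in $U$ it stays in $U$, so $f^n(A\setminus W)\subset U$ for all $n\ge N$, and $\dist(U,I_+)>0$. For the first $n<N$ iterates, $f^n$ is continuous on $A$, and $f^n(A)\cap I_+=\emptyset$ because $f^{-1}(I_+)=I_+$ and $f$ is injective on $\mathbb{C}^k$, while $f(L_\infty\setminus I_+)=I_-$. Hence $f^n(A)$ is compact and disjoint from $I_+$, which gives a positive distance for each such $n$.

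\textbf{Step 2 (near $K_+$).} We take $W=\{G_+<c\}\cap\{\|z\|<R\}$, using that $K_+$ is a compact subset of $\mathbb{C}^k$ only in the sense of its closure, which may touch $I_+$. This is the main obstacle: $\overline{K_+}=K_+\cup I_+$, so $A$ could approach $K_+$ only far from $I_+$. We work instead with $A\cap\mathbb{C}^k\cap\{\|z\|\le R\}$. Points with $G_+=0$ have bounded forward orbits, which are contained in the filled set $\{\|z\|\le R_0\}$ for a fixed $R_0$ by the standard filtration for regular automorphisms. Points in $\{G_+\le c\}\cap\{\|z\|\le R\}$ have orbits in a region $\{\|z\|\le R_0\}\cup V_+$, where $V_+$ is the filtration piece near $I_-$ in which $f$ escapes toward $I_-$. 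Both of these stay a definite distance from $I_+$.

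\textbf{Step 3.} Concretely, I would use the filtration $V=\{\|z\|\le R\}$, $V^\pm$, with $f(V^+)\subset V^+$ and $\overline{V^+}\cap I_+=\emptyset$, and with $f(V)\subset V\cup V^+$. Orbits starting in $V$ or $V^+$ never enter $V^-$, which is the only piece touching $I_+$. Points of $A$ in $V^-$ form a compact set away from $I_+$, so their orbits leave $V^-$ within a uniform time $N$ by the compactness argument of Step 1. Taking $\theta$ to be the minimum of $\dist(V\cup V^+,I_+)$ and the finitely many distances $\dist(f^n(A),I_+)$ for $n<N$ gives the claim.
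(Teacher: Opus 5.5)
There is a genuine gap. Your Step 1 only works for points in the basin $\mathbb{P}^k\setminus\overline{K_+}$ of $I_-$. Points of $A\cap K_+$ never enter $U$, and points of $A$ near $K_+$ enter it only after unbounded times, so the whole difficulty is pushed into Steps 2--3, where it is asserted rather than proved.

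\begin{itemize}
\item \textbf{Step 2.} You claim that orbits of points of $\{G_+\le c\}\cap\{\|z\|\le R\}$ stay in $\{\|z\|\le R_0\}\cup V_+$, at a definite distance from $I_+$. For points of $K_+$ this is essentially the lemma itself, stated without proof. As literally written (orbits of all points with $G_+=0$ lie in a fixed ball), it is false, since $K_+$ is unbounded with $\overline{K_+}\supset I_+$.
\item \textbf{Step 3.} You invoke a ``standard filtration'' $V=\{\|z\|\le R\}$, $V^\pm$ with $f(V)\subset V\cup V^+$, $f(V^+)\subset V^+$, and the property that orbits starting in $V\cup V^+$ never enter $V^-$. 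For $k=2$ and the classical normal form this is standard. For a general H\'enon--Sibony map of $\mathbb{C}^k$, however, it is not among the facts in Proposition~\ref{prop:property-of-Henon-Sibony-maps}. Its key property is again the lemma, applied to the compact set $\overline{V\cup V^+}$.
\item \textbf{Uniform exit time.} The claim that points of $A\cap V^-$ leave $V^-$ in a uniform time ``by the compactness argument of Step 1'' does not follow. That argument used that every point lies in the basin of $I_-$, which fails for points of $A\cap V^-\cap K_+$. To force exit you would need an extra unstated property, such as $f^{-1}(V^-)\subset V^-$ with $\|f^{-1}(w)\|\gtrsim\|w\|^{d_-}$ on $V^-$. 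You would also need an openness argument to make the exit time uniform.
\end{itemize}

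The missing idea is to apply property (3) to $f^{-1}$ instead of $f$. Since $I_+$ is attracting for $f^{-1}$, there is a neighbourhood $U$ of $I_+$ with $f^{-1}(U)\subset U$. It can be taken small enough that $U\cap A=\emptyset$ and $U\cap I_-=\emptyset$. If $f^n(a)\in U$ for some $a\in A\cap\mathbb{C}^k$, then $a\in f^{-n}(U)\subset U$, a contradiction. For $a\in A\cap L_\infty$, one has directly $f^n(a)\in I_-$ for all $n\ge 1$. Hence $f^n(A)\subset\mathbb{P}^k\setminus U$ for all $n\ge 0$, and $\theta:=\dist(\mathbb{P}^k\setminus U,I_+)>0$ works.

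This is the paper's proof. The complement $\mathbb{P}^k\setminus U$ is exactly the forward-invariant region ``$V\cup V^+$'' your filtration was supposed to supply. With it, no splitting of $A$, no uniform entrance times and no analysis near $K_+$ are needed.
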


\begin{proof}
Since $I_+$ is attracting for $f^{-1}$ by Proposition~\ref{prop:property-of-Henon-Sibony-maps}~(3), there exists an open neighbourhood $U$ of $I_+$ such that
$f^{-1}(U)\subset U$.
Since $A$ is compact and $A\cap I_+=\emptyset$, we may choose $U$ small enough so that
$
A\cap U=\emptyset .
$
We claim that
\begin{equation}\label{eq:fnAU}
f^n(A)\cap U=\emptyset
\quad \text{for all } n\geq 0.
\end{equation}
Indeed, if $f^n(a)\in U$ for some $a\in A$ and $n\in\mathbb{N}$, then using the inclusion
$f^{-1}(U)\subset U$ repeatedly gives
$
a\in f^{-n}(U)\subset U,
$
which contradicts 
the assumption $A\cap U=\emptyset$.
Thus
\eqref{eq:fnAU} holds.

Since $U$ is a neighbourhood of the compact set $I_+$, there exists $\theta>0$ such that
\begin{equation}\label{eq:dist}
\operatorname{dist}(\mathbb P^k\setminus U,I_+)\geq \theta.
\end{equation}
The assertion follows from \eqref{eq:fnAU} and \eqref{eq:dist} taking the infimum over $n$.
\end{proof}
\begin{corollary}\label{cor:bound-for-u+}
    Let $f$ be as above and  $u_+$ be a Green quasi-potential of $T_+$. Then,
    for every compact set $A$  satisfying $A\cap I_+=\emptyset$,
the sequence  $M_n:=\max_{z\in A}|(f^n)^*u_+(z)|$ is bounded.
\end{corollary}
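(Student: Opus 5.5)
The plan is to reduce the statement to Lemma~\ref{lem:open-neighb} together with the continuity of $u_+$ away from $I_+$ (Proposition~\ref{prop:property-of-Henon-Sibony-maps}~(4)). I understand the Green quasi-potential as a function $u_+$ with $T_+=\omega_{\mathrm{FS}}+dd^c u_+$, which is Hölder continuous, hence in particular continuous, on $\mathbb P^k\setminus I_+$. On the set where $f^n$ is holomorphic I interpret $(f^n)^*u_+$ as the composition $u_+\circ f^n$.

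The first step is to check that $f^n$ is well defined, and holomorphic in a neighbourhood of each point of $A$, so that $(f^n)^*u_+(z)=u_+(f^n(z))$ makes sense for $z\in A$. The indeterminacy set of $f^n$ is contained in the set of points $z$ such that $f^j(z)\in I_+$ for some $0\le j\le n-1$. By Lemma~\ref{lem:open-neighb}, for every $j\ge 0$ the set $f^j(A)$ stays at distance at least $\theta$ from $I_+$. An induction on $j$ then shows that each $f^j$ is holomorphic near $A$ and that $f^j(A)$ avoids $I_+$. (On $\mathbb C^k$ this is automatic; near $L_\infty\setminus I_+$ it is consistent with Proposition~\ref{prop:property-of-Henon-Sibony-maps}~(1).)

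The second step gives the uniform bound. Set
\[
K_\theta:=\{x\in\mathbb P^k:\ \dist(x,I_+)\ge\theta\},
\]
where $\theta$ is the constant from Lemma~\ref{lem:open-neighb}. This is a closed, hence compact, subset of $\mathbb P^k\setminus I_+$, and the lemma gives $f^n(A)\subset K_\theta$ for all $n\ge0$. Since $u_+$ is continuous on $\mathbb P^k\setminus I_+$, it is bounded on $K_\theta$; let $C:=\max_{K_\theta}|u_+|<\infty$. It follows that
\[
M_n=\max_{z\in A}|u_+(f^n(z))|\le \max_{x\in K_\theta}|u_+(x)|=C
\]
for every $n$. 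The maximum defining $M_n$ is attained because $u_+\circ f^n$ is continuous on the compact set $A$.

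The only point requiring care is the first step, namely that $(f^n)^*u_+$ really coincides with $u_+\circ f^n$ on $A$, i.e.\ that no iterate of $A$ meets the indeterminacy set. This follows from the invariance of $\mathbb P^k\setminus U$ under $f$ established in the proof of Lemma~\ref{lem:open-neighb}. Everything else is compactness and continuity, and the bound $C$ depends only on $A$, through $\theta$.
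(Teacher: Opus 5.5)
Your proof is correct and follows the paper's approach: both use Lemma~\ref{lem:open-neighb} to keep every $f^n(A)$ inside one fixed compact subset of $\mathbb P^k\setminus I_+$, and then bound $|u_+|$ there by continuity (the paper uses $\mathbb P^k\setminus U$, you use $\{x:\operatorname{dist}(x,I_+)\ge\theta\}$). Your first step, checking that $(f^n)^*u_+=u_+\circ f^n$ on $A$, is a detail the paper leaves implicit.
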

\begin{proof}
As in the proof of Lemma~\ref{lem:open-neighb} we can find a small open neighbourhood $U$  of $I_+$  such that  
$f^n(A)\cap U=\emptyset$ for all  integers $n\geq 0$. Using this  together with the continuity of the Green quasi-potential \(u_+\) outside \(I_+\) (see Proposition~\ref{prop:property-of-Henon-Sibony-maps}~(4)), 
for every $n$
we obtain    
\begin{align*}
M_n&=\max_{z\in A}|(f^n)^*u_+(z)|=\max_{z\in A}|u_+(f^n(z))|
\leq \max_{z\in \P^k\setminus U}|u_+(z)|< +\infty.
\end{align*}
The assertion follows.
\end{proof}

\section{Proof of Theorem~\ref{thm:main}}
{Throughout this section, we fix a H\'enon--Sibony map
$f$ of $\mathbb{C}^k$
of algebraic degree $d_+\geq 2$, with inverse $f^{-1}$ of algebraic degree $d_-$, and an integer $1\le p\le k-1$ such that $\dim I_-=p-1$. We also fix the positive closed $(1,1)$-Green currents $T_{\pm}$ on $\mathbb{P}^k$ associated with the birational extensions of $f^{\pm}$, which we still denote by $f^{\pm}$.}

In what follows, 
the pairing $\langle\cdot,\cdot\rangle$  is used for the integral of a function with respect to a measure or more generally the value of a current at a test form. 
The mass of a positive closed $(s,s)$-current $S$ on $\mathbb{P}^k$ is  $\displaystyle \ \|S\|:=\langle S, \omega_{\mathrm{FS}}^{k-s}\rangle$. The notation $\lesssim$  stands for inequalities  up to a multiplicative constant.

\subsection{Decay estimates outside $\supp \,T^{\ell}_+$} 
 The following proposition is the key ingredient in the proof of Theorem \ref{thm:main}.

\begin{proposition}\label{prop:Induction-step}
 Let $1\leq \ell\leq p$ be an integer.
Assume 
that $V\subset \P^k$ is 
open 
and satisfies
$\supp\, T^{\ell}_+\cap \overline{{V}}=\emptyset$. There exists a constant \( C>0 \) such that, for all
integers \( 0 \leq i \leq \ell \) and \( 0 \leq j \leq k-\ell+i \), and for all integers \( 0 \leq m_1 \leq \cdots \leq m_j< m \), one has
\begin{align}\label{eq:first}
\left\|T_{+}^{\ell-i} \wedge\bigwedge_{s=1}^{j}\left(f^{m_s}\right)^*\left(\omega_{\mathrm{FS}}\right) \right\|_{{ V}} \leq C d_{+}^{m(i-1)}.
\end{align}   
\end{proposition}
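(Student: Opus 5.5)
We argue by induction on $i$, following the scheme of \cite{deT,Dinh-attracting-current}, at each step shrinking to a slightly smaller open set. Since only the compact set $\overline V$ matters, we fix a finite chain of open sets $V=V_{\ell}\Subset V_{\ell-1}\Subset\cdots\Subset V_0$ with $\overline{V_0}\cap \supp\,T_+^\ell=\emptyset$; this is possible because $\supp\,T_+^\ell$ is closed. We also fix cut-off functions $0\le\chi_i\le 1$, equal to $1$ on $V_{i}$ and compactly supported in $V_{i-1}$, and we prove \eqref{eq:first} with $V$ replaced by $V_i$ at step $i$. Note that $I_+\subset \supp\,T_+^{p+1}\subset\supp\,T_+^\ell$ by Proposition~\ref{prop:property-of-Henon-Sibony-maps}~(6). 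Hence $\overline{V_0}\cap I_+=\emptyset$, and Corollary~\ref{cor:bound-for-u+} applies on $\overline{V_0}$.

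\textbf{Base case $i=0$.} Here $T_+^{\ell}$ vanishes on $V_0$, so the left-hand side of \eqref{eq:first} is $0$ and the bound $C d_+^{-m}$ holds trivially.

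\textbf{Induction step.} Assume the bound at level $i-1$ on $V_{i-1}$, for all $j\le k-\ell+i-1$. We treat the case of level $i$ with $j\le k-\ell+i$ factors. We write $\omega_{\mathrm{FS}}=T_+-dd^c u_+$, where $u_+$ is the Green quasi-potential. Pulling back gives
\[
(f^{m_s})^*\omega_{\mathrm{FS}}=d_+^{m_s}T_+-dd^c\big(u_+\circ f^{m_s}\big).
\]
We apply this to the factor with the largest index $m_j$; if $j=0$, the mass is bounded by $\|T_+^{\ell-i}\|=1\le C d_+^{m(i-1)}$ for $i\ge1$. Set $R:=T_+^{\ell-i}\wedge\bigwedge_{s<j}(f^{m_s})^*\omega_{\mathrm{FS}}$. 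Then
\[
\big\langle R\wedge (f^{m_j})^*\omega_{\mathrm{FS}},\chi_i\,\omega_{\mathrm{FS}}^{k-\ell+i-j}\big\rangle
= d_+^{m_j}\langle R\wedge T_+,\chi_i\,\omega^{\cdot}\rangle-\langle R, (u_+\circ f^{m_j})\, dd^c\chi_i\wedge\omega^{\cdot}\rangle .
\]

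The first term is a level-$(i-1)$ quantity with $j-1$ factors. The count works out because $j-1\le k-\ell+(i-1)$. By the induction hypothesis on $V_{i-1}$, this term is at most $d_+^{m_j}\,C d_+^{m(i-2)}\le C d_+^{m(i-1)}$, using $m_j<m$.

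For the second term, we bound $|dd^c\chi_i|\lesssim \omega_{\mathrm{FS}}$ on $V_{i-1}$. By Corollary~\ref{cor:bound-for-u+} applied to $A=\overline{V_0}$, we have $|u_+\circ f^{m_j}|\le M$ uniformly in $m_j$. So the second term is at most a constant times $\|R\|_{V_{i-1}}$, where now $R$ has $j-1$ pullback factors and total bidegree one higher in $\omega_{\mathrm{FS}}$. This is again of the form in \eqref{eq:first} at level $i$, but with one fewer factor. We therefore run an inner induction on $j$. At the start of this inner induction, when $j=0$, the mass is $\|T_+^{\ell-i}\|_{V}\le 1$, which is acceptable whenever $i\ge1$.

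\textbf{Obstacle and bookkeeping.} The main obstacle is precisely this doubling structure: the inner induction on $j$ must also shrink the domain. This forces a double-indexed family of nested neighbourhoods $V_{i,j}$, of which there are finitely many since $i\le\ell$ and $j\le k$. With this family, the constants multiply only finitely many times and $C$ stays independent of $m$ and the $m_s$.

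A second point needs care. The products $T_+^{\ell-i}\wedge(\cdots)$ must be well defined, and the integration by parts must be justified. The pullbacks $(f^{m_s})^*\omega_{\mathrm{FS}}$ are smooth on $\mathbb{C}^k$ but singular at $I_+$, and $T_+$ has continuous potentials away from $I_+$. Since $\overline{V_0}$ avoids $I_+$, everything is local there: we have continuous potentials and smooth forms, so the wedge products are classical Bedford--Taylor products. The Stokes step is valid because $\chi_i$ has compact support in $V_{i-1}$.

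Finally, the sharp exponent $m(i-1)$ comes only from the terms carrying $d_+^{m_j}$ factors. These appear at most $i$ times before the current reaches $T_+^\ell$, which vanishes on $V_0$. Each such term contributes at most $d_+^m$, against the initial $d_+^{-m}$ of the base case.
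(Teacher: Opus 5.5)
Your proposal is correct and is essentially the paper's proof. It uses the same double induction on $(i,j)$, splitting one pullback as $(f^{m_s})^*\omega_{\mathrm{FS}}=d_+^{m_s}T_+-dd^c(u_+\circ f^{m_s})$: the first term drops to $(i-1,j-1)$, and the second, after Stokes and the uniform bound of Corollary~\ref{cor:bound-for-u+}, drops to $(i,j-1)$. Splitting off $m_j$ rather than $m_1$, as the paper does, makes no difference.

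The single chain $V_i$ you start with does not close the inner induction, since the second term lands on $V_{i-1}$ at level $i$. The double-indexed family you then invoke fixes this, provided you impose $V_{i,j}\Subset V_{i-1,j-1}\cap V_{i,j-1}$; with that condition it is exactly the paper's $W_{i,j}$.
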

When $j=0$, we mean that 
the left-hand side of \eqref{eq:first} 
is equal to $\|T_+^{\ell-i}\|_V$. 
\begin{proof} 
For $i,j$ as in the statement,
construct inductively 
an open neighbourhood
$W_{i,j}$
of $\overline{{V}}$ satisfying
\[
\overline{W}_{i,j}\cap \supp \,T_+^\ell=\emptyset
\qquad 
\text{and}
\qquad
W_{i,j} \Subset W_{i-1,j-1}\cap W_{i,j-1}.\]
We can choose the initial sets $W_{i,0}$ and $W_{0,j}$ so that 
$\overline V \Subset W_{i,0}=W_{0,j} \Subset \mathbb{P}^k \setminus \supp \,T_+^\ell
$.
 To prove (\ref{eq:first}), it suffices to show the inequality
\begin{align}\label{eq:two}
\left\|T_{+}^{\ell-i} \wedge\bigwedge_{s=1}^{j}\left(f^{m_s}\right)^*\left(\omega_{\mathrm{FS}}\right) \right\|_{W_{i, j}} \leq c_{i, j} d_{+}^{m(i-1)}
\end{align}
where $c_{i, j} \geq 0$ is a constant independent of $m$ and of $m_1, \ldots, m_j$. 
We proceed by induction on the pair $(i, j)$, with $0 \leq i \leq \ell$ and $0 \leq j \leq$ $k-\ell+i$ as in the statement. 

It is clear that 
\eqref{eq:two}
holds when $i=0$ and also when $j=0$. Indeed,
by construction,
the current $T_{+}^{\ell}$ does not give mass to the sets of the form
$W_{0, j}$.
In the case $j=0$, the integral on the left-hand side of \eqref{eq:two} is bounded above by \(1\), since for every $i$ we have
\[
\|T_+^{\ell-i}\|_{W_{i,0}} \leq \|T_+^{\ell-i}\|=\|\{T_+^{\ell-i}\}\|=\|\{\omega^{\ell-i}_{FS}\}\|=1.
\]
Therefore,
\eqref{eq:two}
holds in these cases 
taking $c_{i,0} = c_{0,j} = 1$.
\medskip

Assume now that \eqref{eq:two}
holds for $(i-1,j-1)$ and $(i,j-1)$. We shall prove it for $(i,j)$.  Fix a smooth cut-off function $\chi_{i,j}:\mathbb{P}^k\to[0,1]$ such that $\supp \, \chi_{i,j}\Subset W_{i-1,j-1}\cap W_{i,j-1}$ and $\chi_{i,j}\equiv 1$ on $W_{i,j}$.
 We only have to prove that
\begin{align}\label{eq:s1}
\left\|\chi_{i,j} T_{+}^{\ell-i} \wedge\bigwedge_{s=1}^{j}\left(f^{m_s}\right)^*\left(\omega_{\mathrm{FS}}\right) \right\| \leq c_{i, j} d_{+}^{m(i-1)}.
\end{align}
{Recall that} $T_+$ is  cohomologous to $\omega_{\textrm{FS}}$. {Hence, we have}  $T_+=\omega_{\textrm{FS}}+ dd^c u_+$, where $u_+$ is a Green quasi-potential of $T_+$ that is locally H\"older continuous on $\mathbb{P}^k\setminus I_+$; { see Proposition~\ref{prop:property-of-Henon-Sibony-maps}~(4).}
Thus, by Proposition~\ref{prop:property-of-Henon-Sibony-maps}~(5)
we have 
\[(f^{m_1})^*(\omega_{\textrm{FS}})=d_+^{m_1}T_+-dd^cu_+\circ f^{m_1}.\] 
Substituting this expression
into the left-hand side  of 
 \eqref{eq:s1} and  using the triangle inequality
 we obtain
\begin{equation}\label{eq:s4}
\begin{aligned}
    \text{LHS of } (\ref{eq:s1})
    \leq &  
    \left|\left\langle \chi_{i,j} T_{+}^{\ell-i} \wedge\bigwedge_{s=2}^{j}\left(f^{m_s}\right)^*\left(\omega_{\mathrm{FS}}\right), (dd^c u_+\circ f^{m_1})\wedge\omega_{\textrm{FS}}^{k-\ell+i-j}
    \right\rangle\right|\\
    &
+d_+^{m_1}\left\|\chi_{i,j} T_{+}^{\ell-i+1} \wedge\bigwedge_{s=2}^{j}\left(f^{m_s}\right)^*\left(\omega_{\textrm{FS}}\right) \right\|.
\end{aligned}
\end{equation}
Using that $\supp \,\chi_{i,j} \Subset W_{i-1,j-1}$ and the induction hypothesis on $(i-1,j-1)$ applied to the second term in the right-hand side of  \eqref{eq:s4}, 
we obtain
\begin{align}\label{eq:s6}
d_+^{m_1}\left\|\chi_{i,j} T_{+}^{\ell-i+1} \wedge\bigwedge_{s=2}^{j}\left(f^{m_s}\right)^*\left(\omega_{\mathrm{FS}}\right) \right\|\leq c_{i-1,j-1}d_+^{m(i-1)}.
\end{align}
We next estimate the first term in the right-hand side of \eqref{eq:s4}. To this end, we apply Stokes' theorem and obtain
\begin{equation}\label{eq:stoks}
\begin{aligned}
& \left|\left\langle \chi_{i,j} T_{+}^{\ell-i} \wedge 
\bigwedge_{s=2}^{j}\left(f^{m_s}\right)^*\left(\omega_{\mathrm{FS}}\right), (dd^c u_+\circ f^{m_1})\wedge\omega_{\textrm{FS}}^{k-\ell+i-j}
\right\rangle\right|\\
& \qquad = \left|\left\langle dd^c\chi_{i,j}\wedge T_{+}^{\ell-i} \wedge\bigwedge_{s=2}^{j}\left(f^{m_s}\right)^*\left(\omega_{\textrm{FS}}\right), (u_+\circ f^{m_1})\omega_{\textrm{FS}}^{k-\ell+i-j}
    \right\rangle\right|.
\end{aligned}\end{equation}
 The last integral 
is supported in $\supp \,\chi_{i,j}$. Since \(\supp \,\chi_{i,j}\) is disjoint from \(\supp \,(T_+^\ell)\), and since \(I_+\subset \supp \,(T_+^\ell)\) by Proposition~\ref{prop:property-of-Henon-Sibony-maps}~(6), it follows that \(\supp \,\chi_{i,j}\Subset \mathbb{P}^k\setminus I_+\).
Therefore, Corollary~\ref{cor:bound-for-u+} implies that there exists a constant $M>0$ such that
$|u_+\circ f^{m_1}|\leq M \quad \text{on } \supp \,\chi_{i,j}$, where the estimate is
uniform in $m_1$.
Using this together with
the inclusion
$\supp \,\chi_{i,j}\Subset W_{i,j-1}$  we obtain
that the right-hand side of 
\eqref{eq:stoks} is bounded by
\begin{align}\label{eq:induction-step-W_{i,j-1}}
M\,\|\chi_{i,j}\|_{\mathcal{C}^2}\left\|T_{+}^{\ell-i} \wedge\bigwedge_{s=2}^{j}\left(f^{m_s}\right)^*\left(\omega_{\mathrm{FS}}\right)\right\|_{W_{i,j-1}}\leq M\, c_{i,j-1}\,\|\chi_{i,j}\|_{\mathcal{C}^2}\,d_+^{m(i-1)},
\end{align}
where
in the last inequality
we used the induction hypothesis on $(i,j-1)$.
Therefore, combining 
\eqref{eq:s4}--\eqref{eq:induction-step-W_{i,j-1}} 
we obtain that \eqref{eq:s1} holds with
\begin{equation}\label{eq:iteration}
c_{i,j}:=c_{i-1,j-1}+M\,\|\chi_{i,j}\|_{\mathcal{C}^2}\,c_{i,j-1}.
\end{equation} 
Recall that $c_{i,0}=c_{0,j}=1$. Therefore,   
setting
$$C_0:=\max\{1, M\,\max_{i,j}\|\chi_{i,j}\|_{\mathcal{C}^2}\}$$ 
and {iterating \eqref{eq:iteration}, we obtain}
$
c_{i,j}\leq 2^jC_0^j\, 
$. In particular, for $i=\ell$ and $j=k$, we have $c_{\ell,k}\leq 2^kC_0^k$. Thus,
 (\ref{eq:first}) holds with
$C:=(2C_0)^k$.
\end{proof}

\subsection{Proof of Theorem~\ref{thm:main}} 
Recall that, for each integer $1\leq \ell\leq p$, the Julia set $J_{\ell}^+$ is defined as $J_{\ell}^+ := \supp \,T_+^{\ell}$. Let ${V}$ be an open neighbourhood of ${Y}$
 such that 
$\overline{{V}}\cap J_{\ell}^+=\emptyset$. By a classical argument due to Gromov \cite{MG}, one has
\begin{equation}\label{eq:entropy-gromov}
h_t(f,{Y})\le \lov(f,{V})
:=\limsup_{m\to\infty}\frac{1}{m}\log \volume(\Gamma_m^{V}),
\end{equation}
where $h_t(f,{Y})$ denotes the topological entropy of $f$ on ${ Y}$ \cite{Bowen},
and
\[
\Gamma_m^{V}=\{(z,f(z),\dots,f^{m-1}(z)):\ z\in {V}\}.
\]
It follows from standard arguments, see 
for instance \cite{MG},
that we also have
\[
k!\,\volume(\Gamma_m^{V})
=\sum_{0\le m_1,\dots,m_k\le m-1}
\left\| \bigwedge_{s=1}^k (f^{m_s})^*({\omega_\mathrm{FS}})\right\|_{ V}.
\]
Since this sum contains $m^k$ terms, it suffices to estimate each integral separately.
Applying Proposition \ref{prop:Induction-step} with  $i:=\ell$ and $j:=k$ we obtain 
\begin{equation}\label{eq: main-point}
\left\|\bigwedge_{s=1}^{k}\left(f^{m_s}\right)^*\left(\omega_{\mathrm{FS}}\right)\right\|_{V} \lesssim d_{+}^{(\ell-1) m}
\end{equation}
where the implicit constant is independent of {$m_1,\ldots,m_k$ and $m$}.
Combining this with \eqref{eq:entropy-gromov}, we obtain
\begin{align*}
    h_t(f,{Y})\leq \limsup_{m\to\infty}\frac{1}{m}\log \left(\frac{m^kd_+^{m(\ell-1)}}{k!}\right)=(\ell-1)\log d_+.
\end{align*}
This completes the proof of Theorem~\ref{thm:main}.

\begin{remark}\label{rem:3.2}
    Using the same arguments as in Proposition~\ref{prop:Induction-step} and Theorem~\ref{thm:main}, one can show that if a compact set $Y$ is disjoint from $J_{\ell'}^-$ (where  $1\leq \ell'\leq k-p$), then
\[
h_t(f^{-1},Y)\leq(\ell'-1)\log d_-.
\]
\end{remark}

\medskip

\begin{proof}[\textbf{Proof of Corollary~\ref{cor:variational-consequence}}]
First, we   show  that $\supp\, \nu \subseteq J_\ell^+$. 
Suppose, for the sake of contradiction, that
\(\supp \,\nu\not\subseteq J_{\ell}^+\). Then there exists an open set
\(U\Subset \mathbb P^k\setminus J_{\ell}^+\) with \(\nu(U)>0\). 
By the regularity of
\(\nu\),
we can choose a compact set \(Y\subset U\) with \(\nu(Y)>0\).
Applying
the variational principle 
(see, e.g., \cite{Bowen, Brin-Katok})
together with 
Theorem~\ref{thm:main}, we obtain
\[
h_{\nu}(f)\leq h_t(f,{Y})\leq (\ell-1)\log d_+,
\]
which contradicts the assumption 
$h_\nu(f)>(\ell-1)\log d_+
$.
Hence, we have \(\supp \,\nu\subseteq J_{\ell}^+\).

Applying the same argument to \(f^{-1}\), using
Remark~\ref{rem:3.2} and the identity
\(h_\nu(f^{-1})=h_\nu(f)\), we also obtain
$\supp \,\nu\subseteq J_{\ell'}^-$.
Therefore, we have
\[
\supp \,\nu\subseteq J_{\ell}^+\cap J_{\ell'}^-,
\]
 and the first assertion is proved. 
The second assertion immediately follows taking
$\ell=p$ and $\ell'=k-p$.
This completes the proof.
\end{proof}

\begin{remark}\label{r:extra}
The proof of Theorem~\ref{thm:main} is quite specific to the algebraic setting
considered here. It relies on the induction procedure involving wedge products of
the Green current \(T_+\) and on the control of its quasi-potential away from \(I_+\).
In more general settings, such as polynomial-like maps or automorphisms of compact
K\"ahler manifolds,
this induction method cannot be employed 
and 
is replaced
by
quantitative exponential convergence results towards
the Green currents and measure,
see \cite{BBR,BBR2026}.
In the
present setting, the main estimate can also be 
obtained
through suitable
quantitative
convergence results toward Green currents, such as \cite[Theorem~1.4]{Ahn18}.
\end{remark}

\end{document}